\documentclass[12pt]{amsproc}
\usepackage{geometry} 
\usepackage{amsmath,amssymb,amsthm}
\geometry{a4paper,left=2cm, right=2cm, top=2.5cm, bottom=2.5cm} 
\usepackage[english]{babel}
\usepackage{tikz}
\usepackage{makecell}
\usepackage{hyperref}
\hypersetup{hypertex=true,colorlinks=true,linkcolor=blue,anchorcolor=blue,citecolor=blue}
\usepackage{graphicx}
\usepackage{setspace}
\usepackage{array}
\usepackage{multirow}
\usepackage{mathrsfs}

\title[pointwise convergence of Schrödinger mean with complex time]{Sharp pointwise convergence of Schrödinger mean with complex time in higher dimensions}
\author{Meng Wang, Zhichao Wang}

\newtheorem{theorem}{Theorem}[section]
\newtheorem{lemma}[theorem]{Lemma}
\newtheorem{proposition}[theorem]{Proposition}
\newtheorem*{remark}{Remark}

\begin{document}
	\begin{sloppypar}
		\maketitle
		\let\thefootnote\relax\footnotetext{
			2020 Mathematics Subject Classification: 42B25.\\
			\textit{Key words and phrases}: Schrödinger mean, Maximal functions, Complex time.}
		\begin{abstract}
			
			In this paper, we establish the almost everywhere convergence of solutions to the Schrödinger operator with complex time $ P_{\gamma}f(x,t) $ in higher dimensions, under the assumption that the initial data $f$ belongs to the Sobolev space $ H^{s}(\mathbb{R}^d) $.

		\end{abstract}
		\section{Introduction}
		\numberwithin{equation}{section}
			
		The solution to the Schr\"odinger equation  
\[
\left\{\begin{array}{ll}
i u_{t}+(-\Delta) u=0, & (x, t) \in \mathbb{R}^{d} \times \mathbb{R}^{+}, \\[2mm]
u(x, 0)=f(x), & x \in \mathbb{R}^{d}
\end{array}\right.
\]  
can be expressed formally as  
\begin{equation}\label{fracsop}
e^{i t(-\Delta)} f(x)=\frac{1}{(2 \pi)^{d}} \int_{\mathbb{R}^{d}} e^{i\left(x \cdot \xi+t|\xi|^{2}\right)} \hat{f}(\xi) \, d\xi .
\end{equation}

Carleson \cite{MR0576038} first posed the problem of determining the optimal regularity exponent \( s \) such that  
\[
\lim_{t \to 0} e^{i t (-\Delta)} f(x) = f(x) \quad \text{almost everywhere for all } f \in H^{s}(\mathbb{R}).
\]  
He established convergence for \( s \geq \frac{1}{4} \), and later Dahlberg and Kenig \cite{MR0654188} proved that this threshold is sharp.

For dimensions \( d > 1 \), the question of almost everywhere convergence becomes considerably more difficult. Considerable work has been devoted to this problem by numerous authors (see, e.g., \cite{MR1315543,MR3241836,MR3702674,MR2264734,MR3613507,MR3903115,MR0904948,MR0934859}). In particular, due to counterexamples constructed by Bourgain \cite{MR3574661}, Du and Zhang \cite{MR3961084} ultimately proved that \( s > \frac{d}{2(d+1)} \) is the critical regularity in higher dimensions. The endpoint case \( s = \frac{d}{2(d+1)} \), however, remains unresolved.

A natural extension of the Schr\"odinger operator involves allowing the time variable to take complex values with positive imaginary part. For instance, replacing \( t \) with \( it \) in (\ref{fracsop}) yields the solution of a linear fractional dissipative equation. For this case, Miao, Yuan, and Zhang \cite{MR2372358} observed that \( f \in L^{2} \) already guarantees pointwise convergence.  
If instead we substitute \( t \) with \( e^{i\theta} t \) in (\ref{fracsop}), the corresponding solution is related to the linear complex Ginzburg--Landau equation; further details can be found in \cite{MR3032976}.  

Replacing \( t \) by \( t + i t^{\gamma} \) in (\ref{fracsop}) leads to the Schr\"odinger operator with complex time  
\[
P_{\gamma} f(x,t):=\frac{1}{(2\pi)^{d}} \int_{\mathbb{R}^{d}} e^{i \left(x\cdot\xi+ t|\xi|^{2}\right)} e^{-t^{\gamma}|\xi|^{2}} \hat{f}(\xi) \, d\xi.
\]  
This problem was first introduced by Sj\"olin \cite{MR2549801}, who showed that for \( 0 < \gamma \leq 1 \) and $d=1$, the condition \( f \in L^{2} \) is optimal, among other related results. Pointwise convergence for such operators was also examined by Sj\"olin and Soria \cite{MR2827425}. Subsequently, Bailey \cite{MR3047427} established that, in one dimension, the sharp regularity requirement is  $s > \min\left\{\frac{1}{4},\frac{1}{2}\left(1-\frac{1}{\gamma}\right)^{+} \right\}$. Some related problems have also been investigated, such as the dimension of divergence sets and the pointwise convergence of Schr\"odinger means for initial data in the Sobolev space \(W^{s,p}(\mathbb{R})\); see \cite{MR4222395,Pan-Sun} for further details.

In this paper, we investigate the convergence properties of $P_{\gamma} f(x,t)$ in higher dimensions. Here is the main result of this paper.
\begin{theorem}\label{theorem1}
    Let \( d\geq 2 \) and \(\gamma >0\). Then
    \begin{equation}\label{pointwise convergence in rd}
        \lim _{t \rightarrow 0} P_{ \gamma} f(x,t)=f(x) \quad \text{a.e. } x \in \mathbb{R}^d ,\quad \forall f \in H^{s}\left(\mathbb{R}^d\right)
    \end{equation}
    whenever  \( s>s_{0}=\min \left\{ \frac{d}{2(d+1)},\frac{d}{d+1}\left(1-\frac{1}{\gamma}\right)^{+} \right\} \). Conversely, (\ref{pointwise convergence in rd}) fails whenever $s<s_0$.
\end{theorem}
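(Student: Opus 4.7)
The plan is to prove the positive direction by reducing to the $L^2$-maximal estimate $\|\sup_{0<t<1}|P_\gamma f(\cdot,t)|\|_{L^2(B(0,1))}\lesssim\|f\|_{H^s}$ for every $s>s_0$, which delivers a.e.\ convergence by the usual density argument, and to establish sharpness via explicit counterexamples. A Littlewood--Paley decomposition $f=\sum_{k\ge 0}f_k$ with $\widehat{f_k}$ supported in $\{|\xi|\sim 2^k\}$ reduces the maximal estimate to the frequency-localized bound
\[
\Bigl\|\sup_{0<t<1}|P_\gamma f_k(\cdot,t)|\Bigr\|_{L^2(B(0,1))}\lesssim 2^{k(s_0+\varepsilon)}\|f_k\|_{L^2},
\]
and the central mechanism is the dissipation-critical time $T_k:=2^{-2k/\gamma}$, across which $e^{-t^\gamma|\xi|^2}|_{|\xi|\sim 2^k}$ transitions from $\Theta(1)$ to rapid decay.

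Two regimes fall out quickly. For $\gamma\le 1$, the representation $P_\gamma f=K_{t^\gamma-it}*f$ with $K_z(y)=(4\pi z)^{-d/2}e^{-|y|^2/4z}$ is an approximate identity at the spatial scale $t^{\gamma/2}$, which dominates the oscillation scale $t^{1/2}$; hence $|P_\gamma f|\lesssim Mf$ and $s_0=0$ follows from the $L^2$-boundedness of the Hardy--Littlewood operator $M$. For $\gamma\ge 2$, factoring $P_\gamma f_k=K_{t^\gamma}*(e^{it(-\Delta)}f_k)$, using the pointwise domination $|K_{t^\gamma}*h|\lesssim Mh$, and commuting $\sup_t$ past $M$ reduces matters to Du--Zhang's maximal estimate (applied on a slightly enlarged ball) and gives $s_0=d/(2(d+1))$. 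The central regime is $1<\gamma<2$, where $s_0=\frac{d}{d+1}(1-\frac{1}{\gamma})$ lies strictly below $d/(2(d+1))$. Here I split $[0,1]=[0,T_k]\cup(T_k,1]$. On the low-dissipation window $[0,T_k]$, the parabolic rescaling $y=2^kx$, $\tau=2^{2k}t$ recasts the estimate as one for $\sup_{0<\tau<T}|e^{i\tau(-\Delta)}g|$ on $B(0,2^k)$ with $T:=2^{2k(1-1/\gamma)}<2^{2k}$ and $g$ of frequency $\sim 1$, and a short-time refinement of the Du--Zhang polynomial-partitioning/wave-packet argument (time horizon truncated from $[0,R^2]$ to $[0,T]$) produces the sharp bound $2^{k(s_0+\varepsilon)}\|f_k\|_{L^2}$. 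On the high-dissipation window $(T_k,1]$ I dyadically decompose $t\in[2^{-l-1},2^{-l}]$ with $0\le l\le 2k/\gamma$: for $l$ well below $2k/\gamma$, the super-exponentially small dissipation factor $e^{-c\,2^{2k-l\gamma}}$ absorbs even the loose Du--Zhang bound on that slice, while for $l$ near the boundary $2k/\gamma$ the same short-time Du--Zhang refinement applies, and the resulting sum matches the $[0,T_k]$ contribution.

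For the sharpness direction, when $\gamma\ge 2$ Bourgain's counterexample for the Schrödinger maximal function transplants unchanged, since its concentration scale $t\sim N^{-2}$ gives a dissipation factor $e^{-N^{2-2\gamma}}$ bounded below by a positive constant. For $1<\gamma<2$ I construct $\widehat f$ supported on a spherical shell at frequency $N$ whose angular aperture is tuned so that at $t_0\asymp T_N=N^{-2/\gamma}$ (where the dissipation factor is $\asymp 1$) the integral $P_\gamma f(\cdot,t_0)$ interferes constructively on a set of controlled measure; replicating the block at a lacunary frequency sequence with randomised phases \`a la Bourgain yields an $f\in H^s$ whose divergence set has positive measure whenever $s<\frac{d}{d+1}(1-\frac{1}{\gamma})$. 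The principal obstacle is the short-time refined Du--Zhang estimate on the oversized ball $B(0,2^k)$: neither a $\sqrt T$-ball covering (which over-counts $\|g\|_{L^2}$) nor a Sobolev-in-time bound (which loses the $2^{2k}$ time-derivative factor) recovers the sharp exponent, and one must rework the polynomial-partitioning/wave-packet scheme of \cite{MR3961084} with the time horizon truncated, carefully tracking how the tube count decreases when $T<R^2$.
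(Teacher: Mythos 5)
Your overall architecture — Littlewood–Paley reduction to a frequency-localized maximal estimate, a time split at the dissipation-critical scale $T_k=2^{-2k/\gamma}$, trivial treatment of the high-dissipation window via the super-exponential decay, and a Bourgain-type counterexample at the dissipation-critical time — is the same as the paper's. But your treatment of the low-dissipation window $[0,T_k]$ has two gaps, one conceptual and one that you flag yourself.

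First, after rescaling you claim the estimate becomes one for $\sup_{0<\tau<T}|e^{i\tau(-\Delta)}g|$, but nowhere do you remove the dissipation factor $e^{-t^\gamma|\xi|^2}$. On $[0,T_k]$ this factor is $O(1)$ but it is $t$-dependent, and you cannot commute it past the supremum. Saying it is "comparable to $1$" does not reduce the dissipative maximal function to the free one: the operator $\sup_t|\int e^{i(x\cdot\xi+t|\xi|^2)}m(t,\xi)\hat f\,d\xi|$ is not controlled by $\sup_t|e^{it(-\Delta)}f|$ just because $m$ is bounded. The paper's mechanism is a Fourier-series expansion of $\phi(\xi)e^{-t^\gamma R^2|\xi|^2}$ on the torus: the coefficients $C_l(t)$ decay like $(1+|l|)^{-(d+1)}$ uniformly in $t$ (with an $R^{O(\varepsilon)}$ loss), and each term becomes the free propagator applied to a modulated copy $g_l$ with $\|g_l\|_2=\|f\|_2$. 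The sup over $t$ then goes inside the absolutely convergent $l$-sum. Some comparable decoupling step (e.g.\ a Taylor/Neumann expansion of $e^{-t^\gamma|\xi|^2}$ in the regime $t^\gamma|\xi|^2<1$, summed geometrically) is needed and is absent from your outline.

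Second, you explicitly name the "short-time refinement of the Du–Zhang polynomial-partitioning/wave-packet argument" as the principal obstacle and leave it open. This is not a loose end but the central quantitative input; without it the low-dissipation estimate, and hence the theorem for $1<\gamma<2$, is unproven. You should be aware that this refinement already exists in the literature: the paper invokes it as Lemma~\ref{lemma1}, a time-localized maximal estimate $\|\sup_{t\in J}|e^{it(-\Delta)}f|\|_{L^2(B(0,1))}\lesssim_\varepsilon(1+R^{\frac{d}{d+1}+\varepsilon}|J|^{\frac{d}{2(d+1)}})\|f\|_2$ for $|J|\le R^{-1}$, which it attributes to \cite{MR4899579} combined with the temporal-localization argument of \cite{MR2970037}. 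Citing this result closes the hole without reworking Du–Zhang.

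On sharpness your plan differs in detail. The paper places $\hat f$ on a lattice of frequency blocks, chooses $t=-x_1/(2R^{\gamma/2})+\tau$ at the dissipation-critical scale, and runs a Pierce-style analysis through Abel summation, Gauss-sum evaluation and Weyl-sum remainder bounds over a Diophantine set $\Omega^*$; this is deterministic and fully explicit. Your spherical-shell-plus-random-phase construction is also Bourgain-flavoured and could plausibly be made to work, but it is a different route and you give only a sketch, so it is hard to assess whether the lower bound on the divergence set comes out at the right scale. Your reduction of $\gamma>2$ to $\gamma=2$ via the observation that the dissipation factor is $\gtrsim 1$ at $t\sim N^{-2}$ is correct and matches the paper's Case 2.
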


\begin{remark}
	When $0<\gamma \leq 1$ and $d=1$, $f\in L^2(\mathbb{R})$ is sufficient to ensure the pointwise convergence, see \cite{MR2549801}. When $1<\gamma < 2$, \( \min \left\{ \frac{d}{2(d+1)},\frac{d}{d+1}\left(1-\frac{1}{\gamma}\right)^{+} \right\}=\frac{d}{d+1}\left(1-\frac{1}{\gamma}\right) \), which indicates that the decay of $e^{-t^\gamma |\xi|^2}$ allows us to relax the regularity requirements.
	\end{remark}

		In the following parts, we will prove upper bounds for maximal functions in Section \ref{chap2}. Necessary conditions for convergence are shown in Section \ref{chap3}.\\
		
		\textbf{Notation.} Throughout this article,  we use $A \lesssim B$  to represent there exists a constant $C$, which does not depend on \( A \) and \( B \) such that $A \leq C B$. 
	  We write $A \gtrsim B$  to mean $B\lesssim A$. 
		We use \( A \sim B \) to mean that \( A \) and \( B \) are comparable, i.e. $A \lesssim B$  and $A \gtrsim B$.  
		We write $A \lesssim_{\alpha} B$  to mean that there exists a constant $C$ depending on variable $\alpha$ such that $A \leq C B$.
		We write  supp \( \hat{f} \subset\left\{\xi:|\xi| \sim R \right\} \) to mean 
		supp \( \hat{f} \subset\left\{\xi: \frac{R}{2}\le |\xi| \le 2R \right\} \) 
		and we will always assume $R\gg1$. 
		We use $C_{X}$ to denote a constant that depends on $X$, where $X$ is a variable.
		$\text {Let } \beta=\left(\beta_{1}, \ldots, \beta_{d}\right) \text { is a multiindex of order }|\beta|=\beta_{1}+\cdots+\beta_{d}=k$.
 We use $b^{+}$ to mean $\max (b,0)$.
 We denote by \(\lfloor a \rfloor\) the greatest integer less than or equal to \(a\) (the floor function), and by \(\lceil a \rceil\) the smallest integer greater than or equal to \(a\) (the ceiling function).
		
		\section{Proof of upper bound for maximal functions}\label{chap2}
		\numberwithin{equation}{section}
		
         		Via Littlewood–Paley decomposition and a standard smoothing argument, Theorem \ref{theorem1} can be reduced to the following maximal estimate.
		
	\begin{proposition}\label{proposition1}
			Let $d\geq 2$, $\gamma>0$ and $R\geq 1$. For any $\varepsilon>0$, we have
\begin{equation}\label{proposition1es}
				\left\|\sup _{0<t<1}\left|P_{ \gamma} f(x, t)\right|\right\|_{L^{2}(B^d(0,1))} \lesssim_{\varepsilon}
				R^{\min \left\{\frac{d}{2(d+1)},\frac{d}{d+1}(1-\frac{1}{\gamma})^{+}\right\}+\varepsilon}\|f\|_{2},
			\end{equation}
whenever \( \hat{f} \) is supported in \(\left\{\xi:|\xi|\sim R \right\} \). 
	\end{proposition}

Firstly, combining with temporal localization method in \cite{MR2970037}, we rewrite an estimate from \cite{MR4899579} as a lemma.
		\begin{lemma}\label{lemma1}
			Let $d\ge 2, J=(0,|J|)\subset[0,1] $ is an interval .  For any \( \varepsilon>0 \), we have
			\begin{equation}\label{3.1}
				\left\|\sup _{t \in J }\left|e^{i t(- \Delta)} f\right|\right\|_{L^{2}(B(0,1))} \lesssim_{\varepsilon} \begin{cases}
				\left(1+R^{\frac{d}{d+1}+\varepsilon} |J|^{\frac{d}{2(d+1)}}\right)\|f\|_{L^{2}\left(\mathbb{R}^{d}\right)}  & |J|\le R^{-1} \\
				R^{\frac{d}{2(d+1)}+\varepsilon} \|f\|_{L^{2}\left(\mathbb{R}^{d}\right)} &  |J|>  R^{-1} \\
				\end{cases}
			\end{equation}
		whenever \( \hat{f} \) is supported in \(\left\{\xi:|\xi|\sim R \right\} \). 		\end{lemma}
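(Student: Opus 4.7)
The plan is to prove the two cases of the estimate separately, according to whether $|J|$ exceeds the inverse-frequency scale $R^{-1}$.

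In the first case $|J|>R^{-1}$, since $J\subset[0,1]$ the supremum over $J$ is dominated by the supremum over $[0,1]$, and the desired bound $R^{d/(2(d+1))+\varepsilon}\|f\|_{L^{2}}$ is then immediate from the sharp Schr\"odinger maximal inequality of Du--Zhang \cite{MR3961084}, in the form recorded in \cite{MR4899579}.

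In the second case $|J|\le R^{-1}$, I establish two separate bounds whose sum reproduces the stated one. The additive $1$ comes from a trivial estimate: writing
\[
e^{it(-\Delta)}f(x) \;=\; f(x) + i\int_{0}^{t}(-\Delta)e^{is(-\Delta)}f(x)\,ds
\]
and passing to the $L^{2}(B(0,1))$-norm of the supremum over $t\in J$, the frequency localization $|\xi|\sim R$ gives an upper bound of order $(1+R^{2}|J|)\|f\|_{2}$, which is $\lesssim\|f\|_{2}$ whenever $|J|\le R^{-2}$ and is subsumed by the refined term otherwise. The refined contribution $R^{d/(d+1)+\varepsilon}|J|^{d/(2(d+1))}\|f\|_{2}$ is obtained by parabolic rescaling: setting $y=\lambda x$, $s=\lambda^{2}t$ for a suitable $\lambda$, the frequency scale transforms as $R\mapsto R\lambda$, the time-interval length as $|J|\mapsto|J|/\lambda^{2}$, the spatial ball as $B(0,1)\mapsto B(0,1/\lambda)$, and the $L^{2}$-norm acquires a Jacobian factor $\lambda^{d/2}$. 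Invoking the sharp Du--Zhang bound in the rescaled configuration, combined with the temporal localization framework of \cite{MR2970037} which handles the resulting maximal inequality on the dilated ball without losing the naive tiling factor, recovers the claimed exponent once one undoes the change of variables.

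The principal obstacle lies in the rescaling step: the choice of $\lambda$ must make the three parameters (frequency, spatial radius, and time length) fit the hypotheses of the sharp maximal estimate simultaneously, and one must control the rescaled estimate on a ball of radius $1/\lambda>1$ without incurring a $(\text{number of tiles})^{1/2}$ loss. This is precisely what the temporal localization method of \cite{MR2970037} accomplishes. Once this geometric step is in hand, the exponents $d/(d+1)$ on $R$ and $d/(2(d+1))$ on $|J|$ are forced by the matching conditions at $|J|=R^{-1}$ (where the refined bound must agree with the Du--Zhang bound from the first case) and at $|J|=R^{-2}$ (where it must coincide with the trivial constant $1$), and summing the two contributions yields the compact form $(1+R^{d/(d+1)+\varepsilon}|J|^{d/(2(d+1))})\|f\|_{2}$.
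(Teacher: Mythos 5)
The paper does not actually prove Lemma \ref{lemma1}: it is stated as a rewriting of an estimate from \cite{MR4899579} combined with the temporal localization method of \cite{MR2970037}, and the authors give no argument. Your plan reconstructs exactly the standard route behind that citation --- Du--Zhang for $|J|>R^{-1}$, a fundamental-theorem-of-calculus bound producing the additive $1$ for $|J|\le R^{-2}$, and parabolic rescaling plus temporal localization for $R^{-2}<|J|\le R^{-1}$ --- so in substance you are taking the same approach as the paper, and like the paper you ultimately defer the one genuinely hard step (the rescaled maximal estimate on the dilated ball $B(0,|J|^{-1/2})$ without a tiling loss) to \cite{MR2970037} and \cite{MR4899579}. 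Two small slips worth fixing: the change of variables as written is inconsistent (with $y=\lambda x$, $s=\lambda^{2}t$ the ball $B(0,1)$ becomes $B(0,\lambda)$ and $|J|$ becomes $\lambda^{2}|J|$; you want $\lambda=|J|^{1/2}<1$ and the substitutions $y=x/\lambda$, $s=t/\lambda^{2}$ to land on $B(0,|J|^{-1/2})$, frequency $R|J|^{1/2}$, and unit time); and the claim that the trivial bound $(1+R^{2}|J|)\|f\|_{2}$ ``is subsumed by the refined term'' for $|J|>R^{-2}$ is false (at $|J|=R^{-1}$ it gives $R$ versus $R^{d/(2(d+1))}$) --- the correct logic is simply to discard the trivial bound in that range and use the refined one, whose value there already exceeds $1$.
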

		We will use {Lemma \ref{lemma1}} to prove Proposition \ref{proposition1}.		 	
		\begin{proof} 
It is obvious that 
\[
\left\|\sup _{0<t<1}\left|P_{ \gamma} f(x, t)\right|\right\|_{L^{2}(B^d(0,1))}\leq \left\|\sup _{0<t\leq R^{-\frac{2}{\gamma}+\varepsilon}}\left|P_{ \gamma} f(x, t)\right|\right\|_{L^{2}(B^d(0,1))}+\left\|\sup _{R^{-\frac{2}{\gamma}+\varepsilon}<t<1}\left|P_{ \gamma} f(x, t)\right|\right\|_{L^{2}(B^d(0,1))}.
\]
		We first handle the second term, which is easier. Notice that
\[
\begin{aligned}
\sup _{R^{-\frac{2}{\gamma}+\varepsilon}<t<1}\left|P_{ \gamma} f(x, t)\right|&\leq \sup _{R^{-\frac{2}{\gamma}+\varepsilon}<t<1}\int_{|\xi|\sim R}  e^{-
	t^{\gamma}|\xi|^{2}}\phi(\frac{\xi}{R})|\hat{f}(\xi)| d \xi\\
&\lesssim e^{-R^{\varepsilon}}\int_{\mathbb{R}^d} \phi(\frac{\xi}{R})|\hat{f}(\xi)| d \xi\\
&\lesssim e^{-R^{\varepsilon}}\left(\int_{\mathbb{R}^d} \phi(\frac{\xi}{R})^2 d \xi\right)^{\frac{1}{2}}\|f\|_{L^{2}\left(\mathbb{R}^{d}\right)}\\
&\lesssim e^{-R^{\varepsilon}}R^{\frac{d}{2}}\|f\|_{L^{2}\left(\mathbb{R}^{d}\right)}
\end{aligned}
\]
		where \(\phi\) is a smooth, radial bump function satisfying $\phi(\xi) \equiv 1$  for $   \left\{ \xi :  |\xi| \sim 1 \right\}$ and $\operatorname{supp} \phi \subset \left\{ \xi : \frac{1}{3} \le |\xi| \le 3 \right\}$.     Then use the fact $e^{-y}\lesssim_\beta y^{-\beta}$ for any $\beta>0$, we can get
\[
\left\|\sup _{R^{-\frac{2}{\gamma}+\varepsilon}<t<1}\left|P_{ \gamma} f(x, t)\right|\right\|_{L^{2}(B^d(0,1))} \lesssim_{d, \varepsilon} \|f\|_{L^{2}\left(\mathbb{R}^{d}\right)}.
\]

Next, we handle the first term. We write
\begin{equation}\label{pgammapre}
\begin{aligned}
P_{\gamma} f(x,t)&=\frac{1}{(2\pi)^d}\int_{\mathbb{R}^d} e^{i \left(x\cdot\xi+ t|\xi|^{2}\right)} e^{-t^{\gamma}|\xi|^2}\phi(\frac{\xi}{R})\hat{f}(\xi) d \xi\\
&=\frac{R^d}{(2\pi)^d}\int_{\mathbb{R}^d} e^{i \left(Rx\cdot\xi+ R^2 t|\xi|^{2}\right)} e^{\Phi(t,\xi)}\phi(\xi)\hat{f}(R\xi) d \xi
\end{aligned}
\end{equation}
where \(\Phi(t,\xi) = -t^{\gamma}R^2|\xi|^2\). We consider \(\phi(\xi) e^{\Phi(t,\xi)}\) as a smooth function on the torus \(\mathbb{T}^d = [-\pi, \pi]^d\) via periodic extension; note that this extension is smooth because \(\operatorname{supp}\phi \subset \{\xi:\frac{1}{3}\le|\xi|\le 3\}\) is compact and well inside \((-\pi,\pi)^d\). Its Fourier series expansion is
\[
\phi(\xi) e^{\Phi(t,\xi)} = \sum_{l \in \mathbb{Z}^d} C_{l}(t) \, e^{i \xi \cdot l},
\]
where
\[
C_{l}(t) = \frac{1}{(2\pi)^d} \int_{[-\pi, \pi]^d} \phi(\xi) e^{\Phi(t,\xi)} \, e^{-i \xi \cdot l} \, d\xi.
\]
For \(0 < t \le R^{-\frac{2}{\gamma}+\varepsilon}\) and \(\xi\in\operatorname{supp}\phi\), we have the derivative estimate \(|\partial_\xi^\beta \Phi(t,\xi)| \lesssim R^{\gamma\varepsilon}\). Consequently, repeated integration by parts yields the following uniform decay estimate for the Fourier coefficients: there exists a constant \(M_d>0\) (depending only on the dimension \(d\) and the bump function \(\phi\)) such that
\[
|C_{l}(t)| \lesssim_d \frac{R^{M_d \gamma\varepsilon}}{(1+|l|)^{d+1}}, \qquad \forall\, l \in \mathbb{Z}^d.
\]
Returning to (\ref{pgammapre}) and inserting the Fourier expansion, we obtain
\[
\begin{aligned}
P_{\gamma} f(x,t) &= {\left(\frac{R}{2\pi}\right)}^{d} \int_{\frac{1}{2}\le|\xi|\le 2 } \sum_{l} C_{l}( t) e^{i \xi \cdot l} e^{i Rx\cdot\xi+R^2 t|\xi|^{2}} \hat{f}\left(R \xi\right) d \xi \\
&={\left(\frac{R}{2\pi}\right)}^{d} \sum_{l} \int_{\frac{1}{2}\le|\xi|\le 2} C_{l}( t) e^{i \xi \cdot l} e^{i Rx\cdot\xi+R^2 t|\xi|^{2}} \hat{f}\left(R \xi\right) d \xi \\
&={\left(\frac{1}{2\pi}\right)}^{d} \sum_{l} \int_{\frac{R}{2}\le|\xi|\le 2 R} C_{l}( t) e^{i \frac{\xi}{ R} \cdot l} e^{i x\cdot\xi+ t|\xi|^{2}} \hat{f}\left(\xi\right) d \xi .
\end{aligned}
\]
Define \(\hat{g_l}(\xi)=e^{i \frac{\xi}{ R }\cdot l} \hat{f}(\xi)\). By Plancherel's identity, \(\|g_l\|_{L^2} = \|f\|_{L^2}\). Using the bound for \(|C_l(t)|\) and the triangle inequality, we estimate
\[
\begin{aligned}
&\left\|\sup _{0<t\leq R^{-\frac{2}{\gamma}+\varepsilon}}\left|P_{\gamma} f(\cdot, t)\right|\right\|_{L^{2}(B(0,1))} \\
& \leq \sum_{l\in\mathbb{Z}^d} \sup_{t} |C_l(t)| \;
\left\|\sup _{0<t\leq R^{-\frac{2}{\gamma}+\varepsilon}}
\left|\int_{\frac{R}{2}\le|\xi|\le 2 R} e^{i x\cdot \xi} e^{i|\xi|^{2} t} \hat{g_l}(\xi) d \xi \right|\right\|_{L^{2}(B(0,1))} \\
& \lesssim_d R^{M_d\gamma\varepsilon} \sum_{l\in\mathbb{Z}^d} \frac{1}{(1+|l|)^{d+1}} \;
\left\|\sup _{0<t\leq R^{-\frac{2}{\gamma}+\varepsilon}} \left| e^{i t (-\Delta)} g_l \right|\right\|_{L^{2}(B(0,1))}.
\end{aligned}
\]
Since \(\sum_{l} (1+|l|)^{-(d+1)} < \infty\) and each \(g_l\) has Fourier support in \(\{\xi: |\xi|\sim R\}\), we can apply Lemma \ref{lemma1} uniformly to complete the estimate.
	\end{proof}
\begin{remark}
Through the proof steps, it is not difficult to see that the condition $0<t<1$ in Proposition \ref{proposition1es} can be strengthened to $t>0$. Based on the proof above, we can also establish an analogue of Lemma \ref{lemma1} for the maximal estimate associated with the Schrödinger operator with complex time, thereby addressing the problems of sequential convergence and convergence rate.
\end{remark}
\section{Necessity}\label{chap3}
We employ arguments from the Niki\v{s}hin-Stein theory to prove the necessity part of Theorem \ref{theorem1}.
\begin{proposition}\label{prop:necessity}
Let \( d \geq 2 \) and \( \gamma > 0 \). If the maximal estimate
\[
\left\|\sup_{0<t<1} \left|P_\gamma f(\cdot, t)\right|\right\|_{L^{2}(B(0,1))} \lesssim \|f\|_{H^s(\mathbb{R}^d)}
\]
holds, then it is necessary that
\[
s \geq \min \left\{ \frac{d}{d+1}\left(1-\frac{1}{\gamma}\right)^{+},\,\frac{d}{2(d+1)}\right\}.
\]
\end{proposition}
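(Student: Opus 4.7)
The plan is to construct, for each $s<s_0:=\min\{\frac{d}{2(d+1)},\frac{d}{d+1}(1-\frac{1}{\gamma})^+\}$, a family $\{f_R\}_{R\ge 1}$ with $\operatorname{supp}\hat f_R\subset\{|\xi|\sim R\}$ violating the stated maximal inequality as $R\to\infty$. The regime $\gamma\le 1$ is vacuous since $s_0=0$, leaving two regimes, $\gamma\ge 2$ and $1<\gamma<2$, which require separate constructions because the dissipation factor $e^{-t^\gamma|\xi|^2}$ behaves differently on the time scales that drive the two competing exponents.

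For $\gamma\ge 2$, where $s_0=\frac{d}{2(d+1)}$, I would import the higher-dimensional counterexample of Bourgain \cite{MR3574661} for the classical Schrödinger maximal operator, which produces $\{f_R\}$ with $\|\sup_t|e^{it\Delta}f_R|\|_{L^2(B(0,1))}\gtrsim R^{d/(2(d+1))}\|f_R\|_2$ at a suitable sequence of times $t\to 0$. By applying parabolic rescaling if necessary, or by simply restricting to the tail of Bourgain's bad-time sequence, one arranges the relevant times to lie in $(0,R^{-2/\gamma}]$. On that window $t^\gamma|\xi|^2\lesssim R^{2-\gamma}\le 1$ for $|\xi|\sim R$, so $e^{-t^\gamma|\xi|^2}$ is bounded below by a positive constant on the frequency support, $|P_\gamma f_R|\gtrsim|e^{it\Delta}f_R|$ on the bad set, and Bourgain's lower bound transfers verbatim to $P_\gamma$.

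For $1<\gamma<2$, where $s_0=\frac{d}{d+1}(1-\frac{1}{\gamma})$, the previous construction is destroyed by dissipation and must be rebuilt at the dissipation-critical scale $t_R:=cR^{-2/\gamma}$, on which $e^{-t_R^\gamma|\xi|^2}\gtrsim 1$. Following Bailey's one-dimensional construction \cite{MR3047427}, I would take as building block a Knapp-type cap $\Omega\subset\{|\xi|\sim R\}$ with tangential aperture $\sim R^{1/\gamma}$ and radial depth $\sim R^{2/\gamma-1}$ --- the parameters at which both the tangential quadratic piece and the cross-term linear piece of $t_R|\xi|^2$ are stationary on $\Omega$ --- and form a Bourgain-type superposition $\hat f_R=\sum_j\varepsilon_j e^{-i(x_j\cdot\xi+t_j|\xi|^2)}\chi_{\Omega_j}$, with phase-centers $(x_j,t_j)$ drawn from an additive/Diophantine set inside $B(0,1)\times(0,t_R]$ and Rademacher signs $\varepsilon_j$. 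A Khintchine-type averaging then produces a single $f_R$ with $\|\sup_t|P_\gamma f_R|\|_{L^2(B(0,1))}/\|f_R\|_{H^s}\gtrsim R^{\frac{d}{d+1}(1-1/\gamma)-s}$, which diverges for $s<s_0$.

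The main obstacle is entirely in the $1<\gamma<2$ case. A single Knapp cap, even after sweeping its peak across the time window by varying $t\in(0,t_R]$, produces only the one-dimensional exponent $\frac{1}{2}(1-\frac{1}{\gamma})$, strictly weaker than the target $\frac{d}{d+1}(1-\frac{1}{\gamma})$ for $d\ge 2$. Recovering the additional Bourgain factor of $\frac{2d}{d+1}$ requires the full $d$-dimensional additive arrangement of cap centers that gives Bourgain his $\frac{d}{d+1}$ gain in the undissipated problem, now \emph{restricted to the narrow window $(0,R^{-2/\gamma}]$} rather than $(0,1]$. Verifying that this arithmetic arrangement still fits inside the dissipation window with the correct number of mutually near-orthogonal centers, while $e^{-t^\gamma|\xi|^2}$ remains bounded below on the frequency support, is the essential technical point with no one-dimensional analogue.
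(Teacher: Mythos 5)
Your skeleton is right --- the case split, the vacuity of $\gamma\le 1$, the reduction of $\gamma>2$ to the undissipated exponent, and the identification of the dissipation-critical scale $t\sim R^{-2/\gamma}$ for $1<\gamma<2$ (the paper works equivalently with frequencies $\sim R^{\gamma/2}$ and times $\sim R^{-1}$). But the main case $1<\gamma\le 2$ is left as a plan rather than a proof, and you say so yourself: the ``essential technical point'' of fitting a full $(d-1)$-dimensional arithmetic family of caps into the narrow time window and extracting the $\frac{d}{d+1}$ gain \emph{is} the proof, and it is not carried out. The paper's execution is deterministic: $\hat f$ is a radial bump of width $R^{1/2}$ about $R^{\gamma/2}$ times a transverse lattice of unit bumps spaced $D=R^{\frac{d+\gamma}{2(d+1)}}$ apart; $t$ is tuned to $x_1$ via $t=-x_1/(2R^{\gamma/2})+\tau$ so the radial factor is bounded below with no stationary-phase loss; the damping weight $e^{-t^\gamma|\xi|^2}$ is peeled off the lattice sum by continuous Abel summation (Lemma \ref{abelsum}); the remaining pure exponential sum is evaluated as complete Gauss sums of modulus $(2q)^{1/2}$ on a Diophantine set of $(x',t)$, with Weyl-sum bounds (Lemma \ref{weylsum}) controlling incomplete remainders; and the measure of the bad set comes from Dirichlet approximation plus Lemma \ref{Vitali}. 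None of these steps appears in your sketch. I would also warn against the Rademacher-sign/Khintchine element: the $\frac{d}{d+1}$ gain comes from coherent square-root cancellation in Gauss sums over a deterministic arithmetic arrangement of centers, and random signs destroy exactly that coherence; Khintchine averaging is the engine of the weaker Dahlberg--Kenig-type examples, not Bourgain's.

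A second, smaller gap is the claimed ``verbatim'' transfer for $\gamma\ge 2$. A pointwise lower bound $e^{-t^\gamma|\xi|^2}\gtrsim 1$ on the frequency support does \emph{not} imply $|P_\gamma f_R|\gtrsim|e^{it\Delta}f_R|$, because Bourgain's lower bound is obtained from exact Gauss-sum identities rather than pointwise constructive interference, and inserting a nonconstant positive weight into an oscillatory sum can change its modulus. For $\gamma>2$ strictly this is harmless, since at $t\sim R^{-1}$ the weight is $1+O(R^{2-\gamma})$ uniformly; but for $\gamma=2$ (which you place in this regime) the weight varies by a genuine constant factor across the support, and one must either run the whole computation with the weight present (as the paper does, via Abel summation) or give some other argument. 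The paper sidesteps this by reducing $\gamma>2$ to $\gamma=2$ and absorbing $\gamma=2$ into the unified $1<\gamma\le2$ construction.
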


The idea of the counterexample originates from \cite{MR3574661}, and a more detailed explanation can be found in \cite{MR4186521}. For the reader's convenience, we follow the notations from \cite{MR3574661} and \cite{MR4186521}, but the parameters in the proof have been adjusted. To accommodate the nature of complex time, we omit the “Removal of the quadratic phase” part in \cite{MR4186521}. This omission allows us to circumvent the estimation of the Weyl‑type sum with a decay factor, and we proceed by applying Abel's summation formula directly, thereby avoiding the associated technical complications.

We will use the following four lemmas. Lemma \ref{abelsum} is a continuous partial summation formula, which will help us separate the error term from the main sum. Lemma \ref{weylsum} provides an estimate for quadratic Weyl sums, which is used to handle the incomplete Gauss sums (i.e., the remainder terms). Lemma \ref{Gausssum} gives an exact value for a specific Gauss sum, and we will apply it to compute the main term in the proof. Lemma \ref{Vitali} can be proved via the Vitali covering lemma; it is employed to obtain a lower bound for the measure of the new set under scaling. The proofs of these lemmas are omitted, since they can be found in \cite{MR4186521} and are relatively standard.

\begin{lemma}[Continuous Abel summation]\label{abelsum}
Let \(\{a_n\}_{n\in\mathbb{Z}}\) be a sequence of complex numbers and let \(h: \mathbb{R}\to\mathbb{C}\) be a continuously differentiable function. For any integers \(M, N \ge 0\), define the right-continuous step function
\[
A(u) = \sum_{n=M}^{\lfloor u \rfloor} a_n = \sum_{\substack{M \le n \le u \\ n \in \mathbb{Z}}} a_n, \qquad (u \ge M).
\]
 Then the following identity holds:
\[
\sum_{n=M}^{M+N} a_n\, h(n) = A(M+N)\, h(M+N) - \int_{M}^{M+N} A(u)\, h'(u)  \, du.
\]
\end{lemma}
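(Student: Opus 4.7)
The plan is to exploit the step-function structure of $A(u)$ to reduce the integral to a finite sum, and then recognize the resulting identity as a rearrangement of the classical discrete Abel summation formula.

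First I would partition the range $[M, M+N]$ into unit intervals $[k, k+1)$ for $k = M, M+1, \ldots, M+N-1$. On each such interval $\lfloor u\rfloor = k$, so $A(u)$ is constant and equal to $A(k) = \sum_{n=M}^{k} a_n$. Pulling $A$ out of the integral on each piece and applying the fundamental theorem of calculus to $h$ would give
\[
\int_M^{M+N} A(u)\, h'(u)\,du = \sum_{k=M}^{M+N-1} A(k)\bigl(h(k+1) - h(k)\bigr).
\]
Next I would split the right-hand side as $\sum_k A(k) h(k+1) - \sum_k A(k) h(k)$ and reindex $j = k+1$ in the first sum. Peeling off the endpoint contributions $A(M+N-1)\,h(M+N)$ and $-A(M)\,h(M)$, the interior terms collapse via $A(k-1) - A(k) = -a_k$ to $-\sum_{k=M+1}^{M+N-1} a_k h(k)$. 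Using the identities $A(M) = a_M$ and $a_{M+N} = A(M+N) - A(M+N-1)$ to absorb the boundary contributions would then rearrange the resulting expression into the claimed formula.

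The argument is essentially a bookkeeping exercise and I do not anticipate any serious obstacle; the only subtlety is keeping track of the boundary values so that the right-continuous convention for $A$ is respected and the upper endpoint produces the main term $A(M+N)\,h(M+N)$ rather than $A(M+N-1)\,h(M+N)$. An equally short alternative would be to apply Riemann--Stieltjes integration by parts to the function-of-bounded-variation $A$ and the $C^1$ function $h$, noting that $dA$ is a sum of point masses $a_n \delta_n$ on the integers; but the direct piecewise computation sketched above is more self-contained.
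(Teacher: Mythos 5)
Your proof is correct. The paper itself does not prove this lemma---it states that the proofs of Lemmas \ref{abelsum}--\ref{Vitali} ``are omitted, since they can be found in \cite{MR4186521} and are relatively standard''---so there is no in-paper argument to compare against. Your piecewise computation is the standard self-contained verification: on each unit interval $[k,k+1)$ the step function $A$ is constant, so
\[
\int_M^{M+N} A(u)\,h'(u)\,du \;=\; \sum_{k=M}^{M+N-1} A(k)\bigl(h(k+1)-h(k)\bigr),
\]
and the telescoping rearrangement you describe, together with $A(M)=a_M$ and $a_{M+N}=A(M+N)-A(M+N-1)$, indeed reduces the right-hand side to $A(M+N)h(M+N)-\sum_{n=M}^{M+N}a_n h(n)$, which is equivalent to the claim. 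The algebra checks out, including the degenerate cases $N=0$ (empty integral) and $N=1$ (empty interior sum). Your alternative remark---Riemann--Stieltjes integration by parts with $dA=\sum_n a_n\,\delta_n$---is the conceptually cleaner route and is essentially how such formulas are usually packaged; the direct decomposition you chose has the virtue of needing no measure-theoretic machinery, which is appropriate given that the lemma is a purely elementary bookkeeping identity.
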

	\begin{lemma}[Quadratic Weyl sum estimate]\label{weylsum}	
Let \( f(x) = \alpha x^{2} + \beta x \), where \( \alpha, \beta \in \mathbb{R} \). Suppose there exist \( a \in \mathbb{Z} \) and \( q \in \mathbb{N}^{+} \) such that  \( (q, a) = 1 \) and  the Diophantine condition \( \bigl| \alpha - \dfrac{a}{q} \bigr| \leq \dfrac{1}{q^{2}} \). Then there exists a constant \( C_{0} > 0 \), independent of \( \alpha, \beta, a, q \), such that for any \( M \in \mathbb{Z} \) and \( N \in \mathbb{N}^{+} \),
\[
\left|\sum_{M \leq n<M+N} e^{2 \pi i f(n)}\right| \leq C_{0}\left(\frac{N}{  q^{\frac{1}{2}}}+q^{\frac{1}{2}}\right)(\log q)^{\frac{1}{2}} .
\]
\end{lemma}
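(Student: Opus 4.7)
The approach is the classical \emph{Weyl differencing} (or squaring) technique for quadratic exponential sums. Writing $S = \sum_{M \leq n < M+N} e^{2\pi i f(n)}$, I would first expand $|S|^2 = \sum_{n_1,n_2} e^{2\pi i (f(n_1)-f(n_2))}$ and substitute $h = n_1 - n_2$. Because $f(n+h) - f(n) = 2\alpha h \, n + (\alpha h^2 + \beta h)$ is linear in $n$, the inner sum becomes a geometric progression, and the usual estimate for a geometric sum gives
\[
|S|^2 \;\leq\; N \;+\; 2\sum_{1 \leq h < N} \min\bigl(N,\; \|2\alpha h\|^{-1}\bigr),
\]
where $\|\cdot\|$ denotes the distance to the nearest integer. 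The $h=0$ contribution is exactly $N$, and for $h \neq 0$ the bound $|1-e^{2\pi i \theta}|^{-1} \lesssim \|\theta\|^{-1}$ is applied with $\theta = 2\alpha h$.

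The main ingredient is the classical Vinogradov-type estimate: under the Diophantine hypothesis $|\alpha - a/q| \leq 1/q^{2}$ with $(a,q)=1$, one has
\[
\sum_{1 \leq h \leq H} \min\bigl(L,\; \|\alpha h\|^{-1}\bigr) \;\lesssim\; \Bigl(\tfrac{H}{q} + 1\Bigr)\bigl(L + q\log q\bigr).
\]
The proof of this inequality partitions $[1, H]$ into $\lceil H/q \rceil$ blocks of length $q$; within each block the Dirichlet approximation forces the fractional parts $\{\alpha h\}$ to be spaced at distance $\gtrsim 1/q$, so the contribution of a single block is $\lesssim L + q \log q$ (the $\log q$ arising from comparison with the harmonic sum $\sum_{j=1}^{q} j^{-1}$). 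I would apply this bound with $H = L = N$ to the sum coming out of differencing, obtaining
\[
|S|^2 \;\lesssim\; \frac{N^2}{q} + N\log q + N + q\log q.
\]

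A brief technical point must be dispatched: the Diophantine hypothesis is stated for $\alpha$, but Weyl differencing produces $2\alpha$. If $q$ is odd, then $\gcd(2a, q) = 1$ and one may use denominator $q$ directly (absorbing a harmless factor of $2$); if $q$ is even, write $q = 2q'$ so that $2a/q = a/q'$ with $\gcd(a,q')=1$, yielding a denominator $q' = q/2 \sim q$. In either case the effective denominator is comparable to $q$, so the bound of the preceding paragraph applies without change.

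Finally, extracting the square root and applying $(\log q)^{1/2} \geq 1$ for $q \geq 3$ together with the trivial bound $N^{1/2} \lesssim N/q^{1/2} + q^{1/2}$ (a consequence of AM--GM) disposes of the cross terms $N\log q$ and $N$, giving exactly $|S| \leq C_0 (N/q^{1/2} + q^{1/2})(\log q)^{1/2}$; the excluded small values of $q$ are absorbed into the constant using the trivial bound $|S| \leq N$. The main obstacle is really only the Vinogradov counting step, which is standard but requires careful bookkeeping of the Diophantine spacing within each block; the rest of the argument is algebraic.
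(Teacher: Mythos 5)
Your proof is correct and follows the standard route --- Weyl differencing to reduce $|S|^2$ to $N+2\sum_{h}\min(N,\|2\alpha h\|^{-1})$, the block-of-length-$q$ Diophantine counting lemma, the parity discussion for the denominator of $2\alpha$, and AM--GM to absorb the $N^{1/2}$ and $N^{1/2}(\log q)^{1/2}$ cross terms --- which is precisely the argument in \cite{MR4186521} that the paper cites in place of giving a proof. One degenerate caveat: for $q=1$ the right-hand side of the stated bound vanishes while the sum can equal $N$, so your ``absorb small $q$ via the trivial bound'' step only covers $q=2$; this is a defect of the lemma as stated rather than of your argument, and is harmless in the application, where $q\sim Q\gg 1$.
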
	
\begin{lemma}[Gauss sum evaluation]\label{Gausssum}
Let \(a, b \in \mathbb{Z}\) and \(q \in \mathbb{N}\). Define the quadratic Gauss sum
\[
G(a, b ; q) := \sum_{\ell=1}^{q} e^{i\Bigl( 2\pi \ell \frac{b}{q} + 2\pi \ell^{2} \frac{a}{q} \Bigr)}.
\]
Suppose \((a, q) = 1\). If \(q \equiv 0 \pmod 4\) and \(b \equiv 0 \pmod 2\), then
\[
|G(a, b ; q)| = (2q)^{\frac12}.
\]
\end{lemma}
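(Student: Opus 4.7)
The plan is to compute $|G(a,b;q)|^2$ by the standard double-sum reduction to orthogonality. Expanding the modulus squared,
\[
|G(a,b;q)|^2 = \sum_{\ell, m = 1}^{q} e^{2\pi i [b(\ell - m)/q + a(\ell^2 - m^2)/q]}.
\]
I would change variables by setting $n = \ell - m$ and keeping $m$ as the free variable, both now understood modulo $q$ (legitimate since the summand has period $q$ in each index). Using the factorization $\ell^2 - m^2 = n^2 + 2nm$, the exponent decouples into a piece depending only on $n$ and a piece linear in $m$, giving
\[
|G|^2 = \sum_{n \bmod q} e^{2\pi i (bn + an^2)/q} \sum_{m \bmod q} e^{2\pi i (2an) m / q}.
\]

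The inner sum over $m$ is the key orthogonality input: it equals $q$ whenever $q \mid 2an$, and vanishes otherwise. Since $(a, q) = 1$, this divisibility reduces to $q \mid 2n$, and because $q \equiv 0 \pmod 4$ (so in particular $q$ is even), it singles out exactly the residues $n \equiv 0$ and $n \equiv q/2 \pmod q$. Inserting these two contributions,
\[
|G|^2 = q \Bigl[\, 1 + e^{2\pi i (b(q/2) + a(q/2)^2)/q} \,\Bigr] = q \bigl[\, 1 + (-1)^b \, e^{\pi i a q/2} \,\bigr].
\]

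The final step is to check that the two hypotheses are exactly what forces both bracketed terms to equal $+1$. The assumption $4 \mid q$ means $q/4 \in \mathbb{Z}$, so $e^{\pi i a q/2} = e^{2\pi i \cdot aq/4} = 1$; the assumption $2 \mid b$ gives $(-1)^b = 1$. Combining these yields $|G|^2 = 2q$, hence $|G| = (2q)^{1/2}$. The whole computation is essentially mechanical once the substitution $n = \ell - m$ is made; the only point that really requires care is tracking the divisibility conditions, since they are \emph{sharp}: if one weakened $4 \mid q$ to $2 \mid q$ or allowed $b$ odd, the sign of the second term would flip and one would get $|G| = 0$ instead of $(2q)^{1/2}$.
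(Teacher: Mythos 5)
Your argument is correct: the squaring-and-orthogonality computation, the reduction of $q\mid 2an$ to $n\equiv 0$ or $q/2 \pmod q$ via $(a,q)=1$ and $2\mid q$, and the final sign check using $4\mid q$ and $2\mid b$ are all sound, and your remark on sharpness is accurate. The paper omits the proof of this lemma entirely (deferring to the cited reference of Pierce), and what you give is the standard argument, so there is nothing to reconcile.
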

	\begin{lemma}[Lower bound for scaled unions]\label{Vitali}
Let \(\{B_j\}\) be finitely many cubes in \(\mathbb{R}^{d-1}\). For a constant \(0<c<1\), denote by \(B_j^*\) the cube with the same center as \(B_j\) and with side length scaled by \(c\). Then
\[
\Bigl| \bigcup_j B_j^* \Bigr| \ge c^{d-1}\, 3^{1-d} \Bigl| \bigcup_j B_j \Bigr|.
\]
\end{lemma}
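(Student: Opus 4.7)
The plan is to construct explicit counterexamples: for each $s<s_0$, I would exhibit a sequence $\{f_R\}_{R\gg1}$ with $\hat f_R$ supported in $\{|\xi|\sim R\}$ and with
\[
\frac{\bigl\|\sup_{0<t<1}|P_\gamma f_R(\cdot,t)|\bigr\|_{L^2(B(0,1))}}{\|f_R\|_{H^s}}\longrightarrow\infty \quad\text{as } R\to\infty,
\]
contradicting the assumed maximal estimate. The template is the Bourgain lattice datum from \cite{MR3574661,MR4186521}: for integer parameters $N,q$ depending on $R$, set $\hat f_R(\xi)=\sum_{n\in A}\eta(\xi-n)$, where $\eta$ is a fixed Schwartz bump and $A\subset\mathbb Z^d\cap\{|\xi|\sim R\}$ is a cube of lattice points of side $N$ with $q\mid N$. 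Plancherel gives $\|f_R\|_{H^s}\sim R^s N^{d/2}$, so the task reduces to producing a matching lower bound on the $L^2$-norm of the maximal function.

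The essentially new constraint, absent in the classical setting, is that the dissipation factor $e^{-t^\gamma|\xi|^2}$ must remain of unit order throughout the frequency support; this forces $t\lesssim R^{-2/\gamma}$ and hence $q\gtrsim aR^{2/\gamma}$ for resonant times $t=a/q$. At such a time and at spatial points $x$ that are $O(1/N)$-close to $2\pi(b-aR/\pi)/q$ for appropriate $b\in\mathbb Z^d$ with $b$ even and $4\mid q$, the sum defining $P_\gamma f_R(x,t)$ has the structure of a $d$-fold product of quadratic Gauss sums multiplied by the smooth envelope $e^{-t^\gamma|\xi|^2}$. Lemma \ref{Gausssum} then evaluates the clean Gauss factor as $|G(a,b;q)|^d=(2q)^{d/2}$, giving a pointwise main-term lower bound of order $N^d q^{-d/2}$.

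The main technical obstacle is that the dissipation factor does not factor out of the Gauss-sum decomposition, so the ``removal of the quadratic phase'' step of \cite{MR4186521}---which in the unitary case relies on a quadratic change of frequency variables---cannot be applied, and the alternative would force the estimation of a Weyl-type sum carrying decay weights. I would bypass this by decomposing the $\xi$-sum along the radial direction within each residue class and applying the continuous Abel summation formula (Lemma \ref{abelsum}) with $h(u)=e^{-t^\gamma u^2}$ as the smooth weight and $A(u)$ equal to the partial sum of the pure oscillatory Gauss-sum kernel up to radius $u$. The boundary term reproduces the full Gauss sum multiplied by an essentially constant value of $h$ (because the constraint $t^\gamma R^2\lesssim1$ forces $\|h'\|_{L^1}\lesssim1$), while the integral term $\int A(u)\,h'(u)\,du$ is bounded by $\sup_u|A(u)|\cdot\|h'\|_{L^1}$. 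The partial sums $A(u)$ are incomplete quadratic Weyl sums, which Lemma \ref{weylsum} controls by $O\!\bigl((Nq^{-1/2}+q^{1/2})(\log q)^{1/2}\bigr)$; an appropriate choice of $N$ and $q$ renders this error subdominant to $N^d q^{-d/2}$.

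Finally I would convert this pointwise lower bound to an $L^2$-bound by applying Lemma \ref{Vitali} to the union of $O(1/N)$-cubes centred at the good spatial points, as the parameters $a$ and $b$ vary, obtaining a measure lower bound of the expected form. The resulting inequality between $R,N,q$, and $s$ must then be optimised subject to $N\leq R$ and $q\geq R^{2/\gamma}$, and this is the most delicate step: one must select $(N,q)$ so that the Weyl-sum error from the Abel step is absorbed, the spatial lattices arising from different residue classes tile efficiently, and the resulting exponent achieves $s_0$. The optimisation splits into two regimes: for $\gamma\geq2$ the classical Bourgain parameter choice (rescaled so that $t\leq R^{-2/\gamma}$) forces $s\geq d/(2(d+1))$, and for $1<\gamma<2$ the constraint $q\geq R^{2/\gamma}$ becomes binding and yields $s\geq\tfrac{d}{d+1}(1-\tfrac1\gamma)$; the case $\gamma\leq1$ is vacuous since then $s_0=0$. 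Taking the minimum in the two regimes recovers exactly $s\geq s_0$, completing the proof.
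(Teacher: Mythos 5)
Your proposal does not address the statement at issue. The statement to be proved is Lemma \ref{Vitali}, the elementary covering estimate
\(\bigl|\bigcup_j B_j^*\bigr|\ge c^{d-1}3^{1-d}\bigl|\bigcup_j B_j\bigr|\)
for finitely many concentric rescaled cubes in \(\mathbb{R}^{d-1}\). What you have written is instead an outline of the proof of the necessity part of Theorem \ref{theorem1} (Proposition \ref{prop:necessity}) --- the Bourgain-type counterexample with Gauss sums, Abel summation, and the dissipation constraint \(t\lesssim R^{-2/\gamma}\). That outline is broadly consistent with what the paper does in Section \ref{chap3}, but it is a proof of a different result; nowhere in it do you establish the measure inequality of Lemma \ref{Vitali} (you only \emph{invoke} that lemma as a black box in your final step). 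As a proof of the stated lemma, the proposal is therefore entirely off target.

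For the record, the intended argument is the standard Vitali selection, which the paper omits as routine: since the family \(\{B_j\}\) is finite, greedily choose a disjoint subfamily \(\{B_{j_k}\}\) by repeatedly taking a cube of largest side length among those not yet meeting any selected cube; then every \(B_j\) meets some selected \(B_{j_k}\) with side length at least that of \(B_j\), so \(B_j\subset 3B_{j_k}\) and hence \(\bigcup_j B_j\subset\bigcup_k 3B_{j_k}\). Consequently
\[
\Bigl|\bigcup_j B_j\Bigr|\le\sum_k|3B_{j_k}|=3^{d-1}\sum_k|B_{j_k}|=3^{d-1}c^{-(d-1)}\sum_k|B_{j_k}^*|=3^{d-1}c^{1-d}\Bigl|\bigcup_k B_{j_k}^*\Bigr|\le 3^{d-1}c^{1-d}\Bigl|\bigcup_j B_j^*\Bigr|,
\]
where the disjointness of the \(B_{j_k}\) (hence of the smaller concentric \(B_{j_k}^*\)) justifies summing the measures. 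Rearranging gives the claim.
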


	\begin{proof}	
		For $j=1,2,\cdots, d$, let \( \varphi_j: \mathbb{R} \rightarrow \mathbb{R}_{+} \) are standard bump functions  satisfying  supp \( {\varphi}_j \subset[-1,1] \) and \(\int_{\mathbb{R}} \varphi_j(\xi_j)d\xi_j =1\). Set $R$ be a large constant.  We divide the proof of necessity into three cases according to the value of \(\gamma\).

\noindent\textbf{Case 1:} \(0<\gamma\leq 1\).  
Define  
\[
\hat{g}(\xi)=\prod_{j=1}^{d}\frac{1}{R}\,
\varphi\!\left(\frac{\xi_j}{R}\right).
\]
Choosing \(t=0\) we have \(|P_\gamma g(x,0)|\sim 1\) for all \(x\in B^{d}(0,\frac{1}{1000R})\), which forces \(s\geq 0\).  

\noindent\textbf{Case 2:} \(\gamma>2\).  
Observe that a counterexample constructed for the critical exponent in the case \(\gamma=2\) remains a counterexample for every \(\gamma>2\).  
Thus it suffices to treat the case \(\gamma=2\), which is already covered in the next case.

\noindent\textbf{Case 3:} \(1<\gamma\leq 2\).  
This is the most delicate range and will occupy the rest of the proof. 

 We write \( x=\left(x_{1}, \ldots, x_{d}\right)=\left(x_{1}, x^{\prime}\right) \in B^d(0,1) \subset \mathbb{R}^{d} \) and  \( \xi=\left(\xi_{1}, \ldots, \xi_{d}\right)=\left(\xi_{1}, \xi^{\prime}\right) \in  \mathbb{R}^{d} \).  Set \( D=R^{\frac{d+\gamma}{ 2(d+1)}} \), and define
\[
\hat{f}(\xi)=\frac{1}{R^{\frac{1}{2}}}\varphi_1\left(\frac{\xi_1-R^{\frac{\gamma}{2}}}{R^{\frac{1}{2}} }\right)  \prod_{j=2}^{d}\left(\sum_{\frac{R^{\frac{\gamma}{2}}}{ D}\leq \ell_{j}<\frac{2R^{\frac{\gamma}{2}}}{D}} 
\varphi_j\left( \xi_j -D \ell_{j}  \right) \right),
\]
where \( \ell=\left(\ell_{2}, \ldots, \ell_{d}\right) \in \mathbb{Z}^{d-1} \). Notice that \(\operatorname{supp}\hat{f}\subset\{|\xi|\sim R^{\frac{\gamma}{2}}\}\),
hence one easily obtains
\begin{equation}\label{2.2}
\|f\|_{H^s} \sim R^{-\frac{1}{4}}\left(\frac{R^{\frac{\gamma}{2}}}{D}\right)^{\frac{d-1}{2}}R^{\frac{\gamma s}{2}}.
\end{equation}
Changing variables shows
\[
\begin{aligned}
P_{\gamma} f(x,t)&=\frac{1}{(2\pi)^d}\int_{\mathbb{R}^d} e^{i x\cdot \xi} e^{it |\xi|^2}     e^{-t^{\gamma} |\xi|^2}   \varphi_1\left(\frac{\xi_1-R^{\frac{\gamma}{2}}}{R^{\frac{1}{2}} }\right)  \prod_{j=2}^{d}\left(\sum_{\frac{R^{\frac{\gamma}{2}}}{ D}\leq \ell_{j}<\frac{2R^{\frac{\gamma}{2}}}{D}} 
\varphi_j\left( \xi_j -D \ell_{j} \right) \right)d\xi\\
=&\int_{\mathbb{R}^d} \prod_{j=1}^{d}\frac{1}{2\pi}\varphi_j(\xi_j) \\
&\times \left\{\sum_{\ell} e^{i\left(\left(R^{\frac{\gamma}{2}}+\xi_1 R^{\frac{1}{2}}\right) x_{1}+\left(\xi^{\prime}+D \ell\right) \cdot x^{\prime}+\left(R^{\frac{\gamma}{2}}+\xi_1 R^{\frac{1}{2}}\right)^{2} t+\left|\xi^{\prime}+D \ell\right|^{2} t\right)}e^{-\left(\left(R^{\frac{\gamma}{2}}+\xi_1 R^{\frac{1}{2}}\right)^{2} t^{\gamma}+\left|\xi^{\prime}+D \ell\right|^{2} t^{\gamma}\right)}\right\} d \xi.
\end{aligned}
\]
Write
\[
\begin{aligned}
(2\pi)^{d}|P_{\gamma} f(x,t)|&=\left| \int_{\mathbb{R}}\varphi_1(\xi_1) e^{i\left(\xi_1 R^{\frac{1}{2}}\left(x_1+2R^{\frac{\gamma}{2}}t\right)+ R\xi_1^2 t\right)} e^{-\left(R^{\frac{\gamma}{2}}+\xi_1 R^{\frac{1}{2}}\right)^{2} t^{\gamma}}d\xi_1 \right|\\
&\times \prod_{j=2}^{d} \left|  \sum_{\ell_j}e^{i\left( D \ell_j \cdot x_j +D^2\left| \ell_j\right|^{2} t\right)}     \left( \int_{\mathbb{R}}\varphi_j(\xi_j)e^{i\left( \xi_j(x_j+2Dt \ell_j )+\xi_j^2 t\right)}e^{-\left(\xi_j+D \ell_j\right)^{2} t^{\gamma}}  d \xi_j \right) \right|\\
&:=|I_1|\times  \prod_{j=2}^{d} | I_j|.
\end{aligned}
\]
Choose \(\displaystyle t=-\frac{x_{1}}{2 R^{\frac{\gamma}{2}}}+\tau \) with 
\(-c_1R^{\frac{\gamma}{2}-1}<x_1<-\frac{c_1}{2}R^{\frac{\gamma}{2}-1}\) and  
\(|\tau|<c_2R^{-\frac{\gamma+1}{2}}\), where the constants satisfy  
\(c_2 < \frac{c_1}{2} < \frac{c_0}{4}\) and \(c_0 \in \bigl(0,\frac{1}{2^{d+1}}\bigr)\). 
Then one can ensure that
\[
\bigl|\xi_1 R^{\frac{1}{2}}\bigl(x_1+2R^{\frac{\gamma}{2}}t\bigr)\bigr|
+ \bigl|\bigl(R^{\frac{\gamma}{2}}+\xi_1 R^{\frac{1}{2}}\bigr)^{2} t^{\gamma}\bigr|
\]
is sufficiently small, so that
\begin{equation}\label{I_1}
|I_1|>1-c_0.
\end{equation}

Next, we handle \( \displaystyle\prod_{j=2}^{d} I_j\).  
For \(\frac{R^{\frac{\gamma}{2}}}{D}<u\leq\big\lceil\frac{2R^{\frac{\gamma}{2}}}{D}\big\rceil\) and \(2\leq j\leq d\), define
\[
S_{j}(u)=S_{j}(x_{j},t;u):=
\sum_{\ell_{j}\in\mathbb{Z}:\,
      \frac{R^{\frac{\gamma}{2}}}{D}\leq\ell_{j}<u}
      e^{i\bigl(D\ell_{j}x_{j}+D^{2}\ell_{j}^{2}t\bigr)}.
\]
Note that \(S_{j}(u)=S_{j}\bigl(\lceil u\rceil\bigr)\). Define also
\[
S(x',t;u):=
\sum_{\substack{\ell'\in\mathbb{Z}^{d-1}\\
                \frac{R^{\frac{\gamma}{2}}}{D}\leq\ell_{j}<u}}
      e^{i\bigl(D\ell'\cdot x'+D^{2}|\ell'|^{2}t\bigr)},
\]
so that \(S(x',t;u)=S\bigl(x',t;\lceil u\rceil\bigr)\) and
\begin{equation}\label{prodinS}
S(x',t;u)=\prod_{j=2}^{d}S_{j}(u).
\end{equation}
For convenience, we set
\[
2R^{\frac{\gamma}{2}\prime}=D\Bigl(\Big\lceil\frac{2R^{\frac{\gamma}{2}}}{D}\Big\rceil-1\Bigr).
\]
For each \(I_j\), we apply Lemma \ref{abelsum} (the continuous partial summation formula) with
\[
a_{\ell}=e^{i(D\ell x_{j}+D^{2}\ell^{2}t)},\qquad 
h(\ell)=\int_{\mathbb{R}}\varphi_{j}(\xi_{j})
        e^{i[\xi_{j}(x_{j}+2Dt\ell)+\xi_{j}^{2}t]}
        e^{-(\xi_{j}+D\ell)^{2}t^{\gamma}}\,d\xi_{j},
\]
where \(\ell\) runs from \(M:=\big\lceil\frac{R^{\frac{\gamma}{2}}}{D}\big\rceil\) to 
\(L:=\big\lceil\frac{2R^{\frac{\gamma}{2}}}{D}\big\rceil-1\). Then
\begin{equation}\label{estimateonMj}
\begin{aligned}
I_{j}&=
\sum_{\ell=M}^{L}e^{i(D\ell x_{j}+D^{2}\ell^{2}t)}
      \int_{\mathbb{R}}\varphi_{j}(\xi_{j})
        e^{i[\xi_{j}(x_{j}+2Dt\ell)+\xi_{j}^{2}t]}
        e^{-(\xi_{j}+D\ell)^{2}t^{\gamma}}\,d\xi_{j}   \\
&=S_{j}\!\left(\frac{2R^{\frac{\gamma}{2}}}{D}\right)
   \int_{\mathbb{R}}\varphi_{j}(\xi_{j})
        e^{i[\xi_{j}(x_{j}+4tR^{\frac{\gamma}{2}\prime})+\xi_{j}^{2}t]}
        e^{-(\xi_{j}+2R^{\frac{\gamma}{2}\prime})^{2}t^{\gamma}}\,d\xi_{j} \\
&\quad -\int_{M}^{L}S_{j}\bigl(\lceil u\rceil\bigr)\,
      \Bigl(\int_{\mathbb{R}}\varphi_{j}(\xi_{j})2D
        \bigl(i\xi_{j}t-\xi_{j}t^{\gamma}-Du t^{\gamma}\bigr)
        e^{i[\xi_{j}(x_{j}+2Dt u)+\xi_{j}^{2}t]}
        e^{-(\xi_{j}+Du)^{2}t^{\gamma}}\,d\xi_{j}\Bigr)du   \\
&=S_{j}\!\left(\frac{2R^{\frac{\gamma}{2}}}{D}\right)
   \int_{\mathbb{R}}\varphi_{j}(\xi_{j})
        e^{i[\xi_{j}(x_{j}+4tR^{\frac{\gamma}{2}\prime})+\xi_{j}^{2}t]}
        e^{-(\xi_{j}+2R^{\frac{\gamma}{2}\prime})^{2}t^{\gamma}}\,d\xi_{j}
   \;+\;E_{j}{(1)} .
\end{aligned}
\end{equation}
Here we have set
\[
E_{j}{(1)}:=-\int_{M}^{L}S_{j}\bigl(\lceil u\rceil\bigr)\,h'(u)\,du.
\]
An elementary estimate then yields
\begin{equation}\label{ej2}
\bigl|E_{j}{(1)}\bigr|
\leq 4\Bigl(R^{\frac{\gamma}{2}}t+(tR)^{\gamma}\Bigr)
      \sup_{\frac{R^{\frac{\gamma}{2}}}{D}\leq u<\frac{2R^{\frac{\gamma}{2}}}{D}}
      \bigl|S_{j}(u)\bigr| .
\end{equation}
Combining (\ref{estimateonMj}) and (\ref{ej2}), we can get 
\begin{equation}\label{final2}
\displaystyle \prod_{j=2}^{d}  I_j=S(x^{\prime},t;\frac{2R^{\frac{\gamma}{2}}}{D})\prod_{j=2}^{d} \left( \int_{\mathbb{R}}\varphi_j(\xi_j)e^{i \left[\xi_j(x_j+4t R^{\frac{\gamma}{2}\prime} )+\xi_j^2 t\right]}e^{-\left(\xi_j+2R^{\frac{\gamma}{2}\prime}\right)^{2} t^{\gamma}}  d \xi_j \right)+E(1)
\end{equation}
with $E(1)$ is a sum of $(2^{d-1}-1)$ parts which satisfies
\begin{equation}\label{estimateofe2}
|E(1)|\leq (2^{d-1}-1)(\text{RHS of } \ref{ej2}) \max\left\{ (\text{RHS of } \ref{ej2}),\left|S_{j}( \frac{2R^{\frac{\gamma}{2}}}{D})\right|\right\}^{d-2}.
\end{equation}

We now construct the set \(\Omega^*\)  containing the desired positions of \(x\). 
Starting from the set \(\Omega\) defined below, we will perform appropriate translations and scalings  to obtain \(\Omega^*\).  For every \(x\in\Omega^*\) we will show that the product \(\displaystyle \prod_{j=2}^{d}|I_j|\) admits a lower bound.  More precisely,
\[
\prod_{j=2}^{d}|I_j|\geq (1-c_0)^{d-1} 
\Bigl|S\Bigl(x^{\prime},t;\frac{2R^{\frac{\gamma}{2}}}{D}\Bigr)\Bigr|-|E{(1)}|,
\]
where \(E{(1)}\) is an error term that will be shown to be negligible. Let 
\[
c_{3}<\min\Bigl\{\frac{c_2}{4},\frac{1}{2\pi}\Bigr\},\qquad 
\mu_{0}:=\frac{1}{(4\pi)^{d}},\qquad 
c_{4}<\frac{1}{2},
\]
and set \(Q=R^{\frac{(\gamma-1)(d-1)}{2(d+1)}}\). Write \(y=(y_1,y')\). Consider the set
\[
y \in \Omega:=\bigcup_{\substack{4\mu_{0} Q \leq q \leq 4 Q \\ q\equiv0\pmod4}} 
        \bigcup_{\substack{1\leq a_{1}\leq q \\ (a_{1},q)=1}}
        \bigcup_{\substack{2\leq a_{2},\dots,a_{d}\leq \frac{q}{2} \\ a_{j}\equiv0\pmod2}}
        \Bigl(\prod_{i=1}^{d}\Bigl[\frac{2\pi a_{i}}{q}-A_{i},\frac{2\pi a_{i}}{q}+A_{i}\Bigr] 
        \bmod 2\pi\mathbb{T}^{d}\Bigr),
\]
where 
\[
A_{1}=\frac{\pi c_{3}}{4Q},\qquad 
A_{2}=\cdots=A_{d}=\frac{\pi c_{4}}{\mu_{0}Q^{\frac{d}{d-1}}}.
\]
We claim that for any \(\varepsilon_{0}>0\), there exists a constant \(c_{\varepsilon_{0}}>0\) such that
\[
|\Omega| \geq c_{\varepsilon_{0}}\,2^{-d}\,3^{1-d}\,c_4^{d-1}\,Q^{-\varepsilon_{0}}.
\]
For the first coordinate, note that \(c_3<\frac{1}{2\pi}\). Hence for each \(q\in[4\mu_{0}Q,4Q]\), the set
\[
\mathscr{V}_{1}(q):=\bigcup_{\substack{1\leq a_{1}\leq q \\ (a_{1},q)=1}}
          \Bigl\{y_{1}\in[0,2\pi]:\Bigl|y_{1}-\frac{2\pi a_{1}}{q}\Bigr|<\frac{\pi c_{3}}{4Q}\Bigr\}
\]
consists of disjoint intervals in \([0,2\pi]\). Using the Euler totient function, one obtains
\[
\min_{4\mu_{0}Q\leq q\leq4Q}\bigl|\mathscr{V}_{1}(q)\bigr| \geq c_{\varepsilon_{0}} Q^{-\varepsilon_{0}}.
\]
Now consider the remaining coordinates. Define
\[
\mathscr{V}_{2}:=\bigcup_{\substack{4\mu_{0}Q\leq q\leq4Q \\ q\equiv0\pmod4}} \mathscr{V}_{2}(q),
\]
where
\[
\mathscr{V}_{2}(q):=\bigcup_{\substack{2\leq a_{2},\dots,a_{d}\leq 2q \\ a_{j}\equiv0\pmod2}}
          \Bigl\{y'\in[0,2\pi]^{d-1}:\Bigl|y_{j}-\frac{2\pi a_{j}}{q}\Bigr|<\frac{\pi c_{4}}{\mu_{0}Q^{\frac{d}{d-1}}},\; j=2,\dots,d\Bigr\}.
\]
Since \(\displaystyle \Omega = \bigcup_{\substack{4\mu_{0}Q\leq q\leq4Q \\ q\equiv0\pmod4}} \bigl(\mathscr{V}_{1}(q)\times\mathscr{V}_{2}(q)\bigr)\), it suffices to prove
\[
\bigl|\mathscr{V}_{2}\bigr| \geq 2^{-d}\,3^{1-d}\,c_4^{d-1}.
\]
Via simultaneous Dirichlet's approximation,  we obtain
\[
\Bigl|\bigcup_{1\leq q\leq Q}
       \bigcup_{\substack{1\leq a_{j}\leq q \\ 2\leq j\leq d}}
       J(q;a_{2},\dots,a_{d})\Bigr| \ge 1,
\]
where
\[
J(q;a_{2},\dots,a_{d}):=\prod_{j=2}^{d}
      \Bigl[\frac{2\pi a_{j}}{q}-\frac{2\pi}{q Q^{\frac{1}{d-1}}},
             \frac{2\pi a_{j}}{q}+\frac{2\pi}{q Q^{\frac{1}{d-1}}}\Bigr].
\]
Recall that \(\mu_{0}=(4\pi)^{-d}\). Hence
\[
\sum_{1\leq q\leq \mu_{0}Q}
\sum_{\substack{1\leq a_{j}\leq q \\ 2\leq j\leq d}}
\bigl|J(q;a_{2},\dots,a_{d})\bigr|
\le (4\pi)^{d-1}\mu_{0} = \frac{1}{4\pi} < \frac{1}{2}.
\]
Consequently,
\[
\Bigl|\bigcup_{\mu_{0}Q\leq q\leq Q}
       \bigcup_{\substack{1\leq a_{j}\leq q \\ 2\leq j\leq d}}
       J(q;a_{2},\dots,a_{d})\Bigr| \ge \frac{1}{2}.
\]
Now we apply a scaling argument. Observe that the condition
\[
\frac{y'}{2} \in \prod_{j=2}^{d}
      \Bigl[\frac{2\pi(2a_{j})}{4q}-\frac{4\pi}{4q Q^{\frac{1}{d-1}}},
             \frac{2\pi(2a_{j})}{4q}+\frac{4\pi}{4q Q^{\frac{1}{d-1}}}\Bigr]
\]
is equivalent to setting \(a_{j}'=2a_{j}\) and \(q'=4q\). Then we have
\(4\mu_{0}Q\le q'\le4Q\), \(q'\equiv0\pmod4\), and \(2\le a_{j}'\le \frac{q'}{2}\) for \(2\le j\le d\). Moreover,
\[
\frac{y'}{2} \in \prod_{j=2}^{d}
      \Bigl[\frac{2\pi a_{j}'}{q'}-\frac{4\pi}{q' Q^{\frac{1}{d-1}}},
             \frac{2\pi a_{j}'}{q'}+\frac{4\pi}{q' Q^{\frac{1}{d-1}}}\Bigr].
\]
Therefore,
\[
\Bigl|\bigcup_{\substack{4\mu_{0}Q\le q'\le4Q \\ q'\equiv0\pmod4}}
       \bigcup_{\substack{2\le a_{2}',\dots,a_{d}'\le\frac{q'}{2} \\ a_{j}'\equiv0\pmod2}}
       \prod_{j=2}^{d}
       \Bigl[\frac{2\pi a_{j}'}{q'}-\frac{4\pi}{q' Q^{\frac{1}{d-1}}},
              \frac{2\pi a_{j}'}{q'}+\frac{4\pi}{q' Q^{\frac{1}{d-1}}}\Bigr]\Bigr|
   \ge \frac{1}{2^{d}}.
\]
Finally, note that \(q'\ge4\mu_{0}Q\) and \(\frac{4\pi}{q'Q^{\frac{1}{d-1}}}\le\frac{\pi c_{4}}{\mu_{0}Q^{\frac{d}{d-1}}}\) 
for our choice of \(c_{4}<\frac{1}{2}\). Applying Lemma~\ref{Vitali} yields the desired estimate
\[
\bigl|\mathscr{V}_{2}\bigr| \ge 2^{-d}\,3^{1-d}\,c_{4}^{d-1},
\]
which completes the proof of the claim.
Next, we determine the location of the point \(x\). For $j=2, \cdots, d$,  consider
\[
\begin{aligned}
x\in \Omega^{*}=&\left\{x \in\left[-c_{1}R^{\frac{\gamma}{2}-1},-\frac{c_{1}R^{\frac{\gamma}{2}-1}}{2}\right] \times\left[-c_{1}, c_{1}\right]^{d-1} \right. \\
&:\left. \exists y \in \Omega, \text { s.t. } y_{1} \equiv-\frac{D^{2}}{2 R^{\frac{\gamma}{2}}} x_{1}(\bmod 2 \pi), y_{j} \equiv D x_{j}(\bmod 2 \pi) \right\}.
\end{aligned}
\]
The region \(\Omega^{*}\) is constructed from \(\Omega\) by the following procedure.
Starting from  \(y \in \Omega\), we first make several periodic copies, 
and then scale each coordinate individually into the target interval
\[
\left[-c_{1}R^{\frac{\gamma}{2}-1},\,-\frac{c_{1}R^{\frac{\gamma}{2}-1}}{2}\right]
\times \bigl[-c_{1}, c_{1}\bigr]^{d-1}.
\]
To illustrate the idea, take the second coordinate as an example.  
Choose an integer \(M \gg 1\) such that \(M c_{1} \geq 2\pi\).  
Define the map \(\iota: \mathbb{R} \to [0,2\pi)\) by \(\iota(z)=\bar{z}\), 
where \(\bar{z}\equiv z \pmod{2\pi}\).  
For a set \(S_{0}\subset [0,2\pi)\), set
\[
S_{1}:=\iota^{-1}(S_{0}) \cap \bigl[-M c_{1}, M c_{1}\bigr).
\]
Clearly, 
\[
|S_{1}| \geq 2\Bigl\lfloor \frac{M c_{1}}{2\pi}\Bigr\rfloor \,|S_{0}|.
\]
Define the scaling map \(r:\mathbb{R}\to\mathbb{R}\) by \(r(z)=Mz\), and let 
\(S_{2}=r^{-1}(S_{1})\). Then \(S_{2}\subset [-c_{1},c_{1})\) and
\[
|S_{2}|=\frac{|S_{1}|}{M}
        \geq \frac{c_{1}}{2\pi}\,|S_{0}|.
\]
Now apply the same two steps to every coordinate.   For the first coordinate we use the scaling factor \(M_{1}= \dfrac{D^{2}}{2R^{\frac{\gamma}{2}}}\), and for the remaining coordinates \(j=2,\dots,d\) we use \(M_{j}=D\).  
A direct computation then gives
\[
|\Omega^{*}| \geq 
\frac{R^{\frac{\gamma}{2}-1}c_{1}^{d}}{4\,(2\pi)^{d}}\;|\Omega|
\geq \frac{c_{1}^{d}}{4\,(2\pi)^{d}}\,
c_{\varepsilon_{0}}\,2^{-d}3^{1-d}c_4^{d-1} \,Q^{-\varepsilon_{0}}\,R^{\frac{\gamma}{2}-1}.
\]

Our next goal is to prove when $x\in \Omega^{*}$, we can choose suitable $t$ satisfying
\[
\left|P_{\gamma} f(x,t_x)\right|\sim R^{\frac{(\gamma-1)(d-1)}{4}}.
\]
Since $ \displaystyle t=-\frac{x_{1}}{2 R^{\frac{\gamma}{2}}}+\tau $, by leveraging the variability of $\tau$, we can achieve the desired estimate. Specifically, set $s=D^2 \tau$ and $y_1+s=\frac{2\pi a_1}{q}=D^2 t (\bmod 2 \pi)$. Then there exists $q\in [4\mu_0 Q,4Q]$ such that
\[
\left|S(x^{\prime},t;\frac{2R^{\frac{\gamma}{2}}}{D})\right|=\left(\frac{\sqrt{2} R^{\frac{\gamma}{2}}}{D q^{\frac{1}{2}}}\right)^{d-1}+E(2)
\] 
with
\begin{equation}\label{estimateofe3}
|E(2)|\leq C_{3}\left(d, \delta_0, \mu_{0}\right)\left(c_{4}+ R^{-\delta_0}\right)\left(\frac{R^{\frac{\gamma}{2}}}{L Q^{\frac{1}{2}}}\right)^{d-1}.
\end{equation}
Here, $\delta_0$ is a small constant satisfying $\delta_0<\frac{\gamma-1}{4(d+1)}$. Recall
$
 \displaystyle S_{j}(u)=\sum_{\frac{R^{\frac{\gamma}{2}}}{D}  \leq \ell_{j}<u} e^{i\left(D \ell_{j} x_{j}+D^{2} \ell_{j}^{2} t\right)}
$
 and  define
\begin{equation}\label{tildes}
\tilde{S}_{j}(u):=\sum_{\frac{R^{\frac{\gamma}{2}}}{D}  \leq \ell_{j}<u} e^{i\left(\ell_{j}\frac{2 \pi a_{j} }{q}+\ell_{j}^{2}\left(y_{1}+s\right)\right)}.
\end{equation}
We write
\[
\begin{aligned}
\left|S(x^{\prime},t;\frac{2R^{\frac{\gamma}{2}}}{D})\right|&=\prod^{d}_{j=2}\left|S_{j}(\frac{2R^{\frac{\gamma}{2}}}{D})\right|\\
&=\prod^{d}_{j=2} \left(\frac{\sqrt{2}R^{\frac{\gamma}{2}}}{D q^{\frac{1}{2}}}+E_j(2)\right)
\end{aligned}
\]
with 
\begin{equation}\label{E_j(2)}
\begin{aligned}
|E_j(2)|&=\left|\left|S_{j}(\frac{2R^{\frac{\gamma}{2}}}{D})\right|-\left|\tilde{S}_{j}(\frac{2R^{\frac{\gamma}{2}}}{D})\right|+\left|\tilde{S}_{j}(\frac{2R^{\frac{\gamma}{2}}}{D})\right|-\frac{\sqrt{2}R^{\frac{\gamma}{2}}}{D q^{\frac{1}{2}}}\right|\\
&\leq \left|\left|S_{j}(\frac{2R^{\frac{\gamma}{2}}}{D})\right|-\left|\tilde{S}_{j}(\frac{2R^{\frac{\gamma}{2}}}{D})\right|\right|+\left|\left|\tilde{S}_{j}(\frac{2R^{\frac{\gamma}{2}}}{D})\right|-\frac{\sqrt{2}R^{\frac{\gamma}{2}}}{D q^{\frac{1}{2}}}\right|.
\end{aligned}
\end{equation}
Therefore, we now need to make the error term sufficiently small in order to prove (\ref{estimateofe3}). We first handle the second term. For \( \frac{R^{\frac{\gamma}{2}}}{D} \leq u \leq \frac{2R^{\frac{\gamma}{2}}}{D} \), Lemma \ref{Gausssum} allows us to split the sum in (\ref{tildes}) into  
\[
\biggl\lfloor \frac{\lceil u \rceil - \bigl\lceil \tfrac{R^{\frac{\gamma}{2}}}{D} \bigr\rceil}{q} \biggr\rfloor
\]  
complete Gauss sums, each of modulus \((2q)^{\frac{1}{2}}\), plus some leftover terms.  Via Lemma \ref{weylsum}, when \( 4 \mu_{0} Q \leq q \leq 4 Q \)  we obtain
\[
\begin{aligned}
\left|\left|\tilde{S}_{j}(u)\right|-\left\lfloor\frac{\lceil u\rceil-\lceil \frac{R^{\frac{\gamma}{2}}}{D} \rceil}{q}\right\rfloor \cdot(2 q)^{\frac{1}{2}}\right|&\leq \sup _{\substack{k \in \mathbb{N}_{+} \\ 1 \leq k<q}}\left| \sum_{\lceil \frac{R^{\frac{\gamma}{2}}}{D} \rceil \leq v<\lceil \frac{R^{\frac{\gamma}{2}}}{D} \rceil+k} e^{i\left(v\frac{2 \pi a_{j}} {q}+v^{2}\frac{2 \pi a_{1}} {q}\right)} \right| \\
&\leq  2 C_{0} q^{\frac{1}{2}}(\log q)^{\frac{1}{2}} .
\end{aligned}
\]
Replacing the floor function by its linear approximation \(\displaystyle \frac{u - \frac{R^{\frac{\gamma}{2}}}{D}}{q}\) and absorbing the resulting error
\begin{equation}\label{sju-Gausssum}
\begin{aligned}
\left|\left|\tilde{S}_{j}(u)\right|-\frac{\sqrt{2}(u-\frac{R^{\frac{\gamma}{2}}}{D} )}{q^{\frac{1}{2}}}\right| & \leq 2 C_{0} q^{\frac{1}{2}}(\log q)^{\frac{1}{2}}+2 \sqrt{2} q^{\frac{1}{2}} \\
&\leq\left(2 C_{0}+2\right) q^{\frac{1}{2}}(\log q)^{\frac{1}{2}}\\
&\leq\left(2 C_{0}+2\right) 2  Q^{\frac{1}{2}}(\log 4Q)^{\frac{1}{2}} \\
&\leq C_{\delta_0}\frac{R^{\frac{\gamma}{2}-\delta_0} }{ D Q^{\frac{1}{2}}}.
\end{aligned}
\end{equation}
For any \( u \in[\frac{R^{\frac{\gamma}{2}}}{D} ,  \frac{2R^{\frac{\gamma}{2}}}{D} ] \), (\ref{sju-Gausssum}) yields 
 \begin{equation}\label{sjutilede}
 \left|\tilde{S}_{j}(u)\right| \leq\left(\frac{\sqrt{2}}{2 \mu_{0}^{\frac{1}{2}}}+\frac{1}{10}\right) \frac{R^{\frac{\gamma}{2}}}{D Q^{\frac{1}{2}}}<(4 \pi)^{d} \frac{R^{\frac{\gamma}{2}}}{D Q^{\frac{1}{2}}}.
 \end{equation}
Next, we handle the first error term. Recall that 
\begin{equation}\label{2.5}
y_j=D x_j(\bmod 2 \pi), \quad y_1+s=\frac{2\pi a_1}{q}=D^2 t (\bmod 2 \pi)
\end{equation}
to get
\[
\left|S_{j}(u)-\tilde{S}_{j}(u)\right|=\sum_{\frac{R^{\frac{\gamma}{2}}}{D}  \leq \ell_{j}<u} e^{i\left(\ell_{j}\left(\frac{2 \pi a_{j} }{q}\right)+\ell_{j}^{2}\left(y_{1}+s\right)\right)}e^{i\left(\ell_{j}\left(y_{j}-\frac{2 \pi a_{j} }{q}\right)\right) }.
\]
Thus we write
\[
\begin{aligned}
\left|\left|S_{j}(u)\right|-\left|\tilde{S}_{j}(u)\right|\right|&\leq \left|S_{j}(u)-\tilde{S}_{j}(u)\right|\\
&\leq   \left| S_{j}(u)-e^{i\left(u \left(y_{j}-\frac{2 \pi a_{j} }{q}\right)\right) }\tilde{S}_{j}(u)  \right|+ \left| e^{i\left(u \left(y_{j}-\frac{2 \pi a_{j} }{q}\right)\right) }\tilde{S}_{j}(u) -\tilde{S}_{j}(u) \right|
\end{aligned}
\]
Using Lemma \ref{abelsum} with
\[
a_\ell =e^{i\left(\ell \left(\frac{2 \pi a_{j} }{q}\right)+\ell^{2}\left(y_{1}+s\right)\right)}, \quad h(\ell)= e^{i\left(\ell \left(y_{j}-\frac{2 \pi a_{j} }{q}\right)\right) }
\]
one obtain
\begin{equation}\label{sju-tildesju1}
\begin{aligned}
\left|S_{j}(u)-e^{i\left(u \left(y_{j}-\frac{2 \pi a_{j} }{q}\right)\right) }\tilde{S}_{j}(u)\right|&\leq  \sup _{v \in[0, \frac{R^{\frac{\gamma}{2}}}{D} ]}\left|\sum_{\frac{R^{\frac{\gamma}{2}}}{D}  \leq k \leq\lceil \frac{R^{\frac{\gamma}{2}}}{D} \rceil+\lfloor v\rfloor} e^{i\left(k\left(\frac{2 \pi a_{j} }{q}\right)+k^{2}\left(y_{1}+s\right)\right)} \right| \cdot\left|y_{j}-\frac{2 \pi a_{j} }{q}\right| \cdot(u-\frac{R^{\frac{\gamma}{2}}}{D} ) \\
&\leq   (4 \pi)^{d}  \frac{R^{\frac{\gamma}{2}}} { DQ^{\frac{1}{2}}}\frac{\pi c_{4}}{\mu_{0}Q^{\frac{d}{ d-1}}}\frac{R^{\frac{\gamma}{2}}} { D}\\
&\leq  4 c_{4} (4 \pi)^{2d} \frac{R^{\frac{\gamma}{2}}} { DQ^{\frac{1}{2}}},
\end{aligned}
\end{equation}
where we use the fact ${Q^{\frac{d}{ d-1}}}=\frac{R^{\frac{\gamma}{2}}}{D}  $ and $\frac{R^{\frac{\gamma}{2}}} { D}\leq u \leq \frac{2R^{\frac{\gamma}{2}}} { D}$.  In addition
\begin{equation}\label{sju-tildesju2}
\begin{aligned}
\left| e^{i\left(u \left(y_{j}-\frac{2 \pi a_{j} }{q}\right)\right) }\tilde{S}_{j}(u) -\tilde{S}_{j}(u) \right|&\leq u \left|y_{j}-\frac{2 \pi a_{j} }{q}\right|\cdot\left|\tilde{S}_{j}(u) \right|\\
& \leq  \frac{2R^{\frac{\gamma}{2}}} { D} \frac{\pi c_{4}}{\mu_{0}Q^{\frac{d}{ d-1}}}  (4 \pi)^{d} \frac{R^{\frac{\gamma}{2}}}{D Q^{\frac{1}{2}}}\\
&\leq 8 c_{4} (4 \pi)^{2d} \frac{R^{\frac{\gamma}{2}}} { DQ^{\frac{1}{2}}}.
\end{aligned}
\end{equation}
Combining (\ref{sju-tildesju1}) and (\ref{sju-tildesju2}) gives 
\begin{equation}\label{sju-tildesju}
\left|\left|S_{j}(u)\right|-\left|\tilde{S}_{j}(u)\right|\right|\leq 12 c_{4} (4 \pi)^{2d} \frac{R^{\frac{\gamma}{2}}} { DQ^{\frac{1}{2}}}.
\end{equation}
Using (\ref{sju-Gausssum}) and (\ref{sju-tildesju}) in (\ref{E_j(2)}), we have
\[
\begin{aligned}
|E_j(2)|&\leq C_{\delta_0}\frac{R^{\frac{\gamma}{2}-\delta_0} }{D Q^{\frac{1}{2}}}+12 c_{4}(4 \pi)^{2d} \frac{R^{\frac{\gamma}{2}}} { DQ^{\frac{1}{2}}}\\
&\leq \left[ C_{\delta_0} R^{-\delta_0} +12 c_{4} (4 \pi)^{2d}  \right]\frac{R^{\frac{\gamma}{2}}} { DQ^{\frac{1}{2}}}.
\end{aligned}
\]
$E(2)$ is a sum of $(2^{d-1}-1)$ part which satisfies
\[
|E(2)|\leq (2^{d-1}-1) \left\{\left[ C_{\delta_0} R^{-\delta_0} +12 c_{4} (4 \pi)^{2d}  \right]\frac{R^{\frac{\gamma}{2}}} { DQ^{\frac{1}{2}}} \right\} \left\{ \sqrt{2}(4 \pi)^{\frac{d}{2}} \frac{R^{\frac{\gamma}{2}}}{D Q^{\frac{1}{2}}} \right\}^{d-2},
\]
which finish the proof of (\ref{estimateofe3}). When $R$ is large and $c_4$ sufficiently small,  one can get
\begin{equation}\label{finalE(2)}
|E(2)|\leq 2^{-\frac{d+5}{2}}\left( \frac{R^{\frac{\gamma}{2}}}{D Q^{\frac{1}{2}}} \right)^{d-1}.
\end{equation}
We also need the upper bound for $S_j(u)$ to  bound $E(1)$.  Combining  (\ref{sjutilede}) and (\ref{sju-tildesju}) gives
\[
|{S}_{j}(u)|\leq 2 (4 \pi)^{d} \frac{R^{\frac{\gamma}{2}}}{D Q^{\frac{1}{2}}} .
\]
Thus combining  with the estimate (\ref{estimateofe2}), we have
\[
|E(1)| \leq 2^{d+1}\left(2 (4 \pi)^{d}\right)^{d-2} Rt \left(\frac{R^{\frac{\gamma}{2}}}{D Q^{\frac{1}{2}}}\right)^{d-1} .
\]
When $R$ is large and $c_1$ sufficiently small,  one can get
\begin{equation}\label{finalE(1+2)}
|E(1)| \leq 2^{-\frac{d+5}{2}}\left( \frac{R^{\frac{\gamma}{2}}}{D Q^{\frac{1}{2}}} \right)^{d-1}.
\end{equation}

Recall that 
\[
P_{\gamma} f(x,t)=\prod^{d}_{j=1} I_j
\]
with (\ref{I_1}) and
\[
\begin{aligned}
\prod^{d}_{j=2}| I_j|&\geq (1-c_0)^{d-1} \left| S(x^{\prime},t;\frac{2R^{\frac{\gamma}{2}}}{D})\right|-|E(1)|\\
&\geq  (1-c_0)^{d-1} \left(\frac{\sqrt{2} R^{\frac{\gamma}{2}}}{D q^{\frac{1}{2}}}\right)^{d-1}-|E(2)|-|E(1)|.
\end{aligned}
\]
Notice that for any \( 4 \mu_{0} Q \leq q \leq 4 Q \) 
\[
 \left(\frac{\sqrt{2} R^{\frac{\gamma}{2}}}{D q^{\frac{1}{2}}}\right)^{d-1}\geq 2^{\frac{1-d}{2}}  \left(\frac{R^{\frac{\gamma}{2}}}{D Q^{\frac{1}{2}}}\right)^{d-1}.
\]
Given that the error terms (\ref{finalE(2)}) and (\ref{finalE(1+2)}) are sufficiently small for \( x \in \Omega^{*} \) and appropriate \( t \), and noting (\ref{2.2}), it follows that
\[
\frac{\displaystyle\left\|\sup _{0<t<1} \left|P_\gamma f(x,t)\right|\right\|_{L^2}}{\|f\|_{H^s}} \gtrsim R^{\frac{(\gamma-1)(d-1)}{4}-\varepsilon_0} R^{\frac{\gamma-2}{4}}R^{\frac{1}{4}}\left(\frac{D}{R^{\frac{\gamma}{2}}}\right)^{\frac{d-1}{2}}R^{-\frac{\gamma s}{2}}=R^{\frac{d(\gamma-1)}{2(d+1)}-\frac{\gamma s}{2}-\varepsilon_0},
\]
which gives $s\geq  \frac{d}{d+1}\left(1-\frac{1}{\gamma}\right)+\frac{2\varepsilon_0}{\gamma}$ when $R$ is sufficiently large. Let $\varepsilon_0\to 0$, we get the desired estimate.

\end{proof}

		~\\

		Meng Wang, Department of Mathematics, Zhejiang University, Hangzhou 310058, China
		
		E-mail address: mathdreamcn@zju.edu.cn\\
		
		Zhichao Wang, Department of Mathematics, Zhejiang University, Hangzhou 310058,  China
		
		E-mail address: zhichaowang@zju.edu.cn

	\end{sloppypar}
\end{document}